\newtheorem{thm}{Theorem}
\newtheorem{prop}{Proposition}
\newtheorem{lma}[prop]{Lemma}
\newtheorem{clm}[prop]{Claim}
\theoremstyle{definition}
\theoremstyle{remark}
\newtheorem{rmk}[prop]{Remark} 
\newtheorem*{qtn*}{Question}
\def\mrm#1{{\mathrm{#1}}}
\def\cl#1{{\mathcal{#1}}}
\newcommand{\R}{{\mathbb{R}}}
\newcommand{\Z}{{\mathbb{Z}}}
\newcommand{\OP}{\operatorname}
\newcommand{\B}[1]{{\mathbf #1}}
\newcommand\vol{\operatorname{vol}}
\newcommand\area{\operatorname{area}}
\newcommand{\G}{\mathcal{G}}
\newcommand{\Id}{{\mathbbm{1}}}
\newcommand{\ol}[1]{\overline{#1}}
\renewcommand{\geq}{\geqslant}
\renewcommand{\leq}{\leqslant}
\DeclareMathOperator{\Ham}{\mathrm{Ham}}
\def\H2{H^{(2)}}
\newcommand{\esemail}{egor.shelukhin@umontreal.ca}
\newcommand{\mbemail}{brandens@bgu.ac.il}
\begin{document}



\title[The $L^p$-diameter of the space of contractible loops]{The $L^p$-diameter of the space of contractible loops}

\author[Michael Brandenbursky and Egor Shelukhin]{Michael Brandenbursky and Egor Shelukhin}\thanks{MB was partially supported by the Israel Science Foundation grant 823/23.\\ \indent ES was partially supported an NSERC Discovery grant and NSF Grant No. DMS-1926686.}


\address{Department of Mathematics, Ben Gurion University of the Negev, Israel}
\email{\mbemail}

\address{Department of Mathematics and Statistics, University of Montreal, C.P. 6128 Succ.  Centre-Ville Montreal, QC H3C 3J7, Canada}
\email{\esemail}

\begin{abstract}
We prove that the space of contractible simple loops of a given fixed area in any compact oriented surface has infinite diameter as a homogeneous space of the group of area-preserving diffeomorphisms endowed with the $L^p$-metric.
As a special case, this resolves the $L^p$-metric analogue of the well-known question in symplectic topology regarding the space of equators on the two-sphere. Our methods involve a new class of functionals on a normed group, which are more general than quasi-morphisms.
\end{abstract}	


\maketitle

\section{Introduction and main results}

Consider the two-sphere $S^2$ with the standard area form and the space $\cl E$ of equators in it: smooth embedded loops dividing the sphere into two components of equal areas. The identity component $\cl G$ of the group area-preserving diffeomorphisms acts transitively on $\cl E.$ This allows us to induce a pseudo-metric on $\cl E$ given a metric on $\cl G.$ A remarkable bi-invariant such metric on $\cl G$ was introduced by Hofer \cite{Hofer-90} and subsequently Viterbo \cite{Viterbo-generating}, Polterovich \cite{Polterovich-Hofer}, and Lalonde-McDuff \cite{LM-energy}. The induced pseudo-metric on $\cl E$ is non-degenerate, and is therefore a metric, as was proven by Chekanov \cite{Chek-finsler}. The first interesting invariant of a metric space is its diameter. Polterovich \cite{P-sphere} proved that the diameter of $\cl G$ in Hofer's metric in infinite, a result which was recently extended to provide quasi-isometric embeddings of linear spaces of arbitrarily large dimension \cite{CHS-sphere, PS-sphere}. However, the diameter of $\cl E$ in the Chekanov-Hofer metric is still unknown at the time of writing, see \cite[Problem 32]{MS}.

In this paper we show that the diameter of $\cl E$ is infinite in the metric induced from the right-invariant $L^p$-metric on $\cl G.$ The case $p=2$ is closely related to questions in hydrodynamics, while the case $p=1$ has the following intuitive definition (see \cite{BS-sphere}). The $L^1$-length $\ell_1(\{\phi_t\})$ of an isotopy $\{\phi_t\}$ is the average length of a trajectory $\{\phi_t(x)\}$ of a point $x,$ and the $L^1$-norm of $\phi$ is defined as \[|\phi|_1 = \inf_{\phi_1 = \phi} \ell_1(\{\phi_t\}).\] It is easy to see that this norm is non-degenerate and defines the right invariant metric 
$$d_1(\phi, \psi) = | \psi \phi^{-1} |_1.$$ 
This metric induces a pseudo-metric on $\cl E,$ given by \[d_1(L,L') = \inf_{\phi(L) = L' } |\phi|_1.\] This pseudo-metric is non-degenerate as shown in Proposition \ref{prop: non-deg} below, and therefore defines a metric.  Our main result concerning the resulting metric space is as follows.

\begin{thm}\label{thm: diam eq}
The diameter of $\cl E$ equipped with the metric $d_1$ is infinite.
\end{thm}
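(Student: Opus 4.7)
The plan is to exhibit a sequence of equators $L_n \in \cl E$ with $d_1(L_0, L_n) \to \infty$ for a fixed base equator $L_0$. Since $d_1(L_0, L_n) = \inf_{\phi(L_0)=L_n}|\phi|_1$, it suffices to construct a single functional $F : \cl G \to \R$ satisfying
(i) a Lipschitz-type bound $F(\phi) \leq C |\phi|_1 + D$, and
(ii) an approximate right-invariance under the stabilizer, $|F(\phi \psi) - F(\phi)| \leq C_0$ whenever $\psi(L_0)=L_0$,
together with a sequence $\phi_n$ such that $F(\phi_n) \to \infty$. Condition (ii) is what forces $F$ to descend, up to a bounded error, to a function $\tilde F$ on $\cl E$ with $\tilde F(L) \leq C' d_1(L_0, L) + D'$, and this is where the more general (non-quasi-morphism) functionals promised in the abstract are needed.

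For the construction of $F$, I would use a Gambaudo-Ghys-type integration against a signature-type invariant. Choose $k \geq 3$ and, for a generic configuration $\vec x = (x_1,\dots,x_k) \in (S^2)^k$ and a Hamiltonian isotopy $\{\phi_t\}$ from $\id$ to $\phi$, let $\beta(\vec x, \{\phi_t\})$ be the resulting element of the surface braid group $B_k(S^2)$, closed up with a fixed system of reference arcs chosen to be transverse to $L_0$. Take a functional $\rho$ on $B_k(S^2)$ whose growth is controlled by the signed crossing number with these reference arcs, and set
\[
F(\phi) \;=\; \int_{(S^2)^k} \rho\bigl(\beta(\vec x, \{\phi_t\})\bigr)\,d\vec x.
\]
The standard crossing estimate, namely that the total number of times the trajectories cross the reference arcs is bounded by the total arclength of the trajectories, yields $F(\phi) \leq C \,\ell_1(\{\phi_t\})$ up to error depending only on $k$; taking the infimum over isotopies gives (i).

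The central step, and in my view the principal obstacle, is (ii). A generic quasi-morphism does not descend to $\cl E$ because its defect contributes a term scaling with $\psi \in \mathrm{Stab}(L_0)$. The way around this is to build $\rho$ so that it is insensitive to the $\mathrm{Stab}(L_0)$ action: one partitions the integration domain according to how the tuple $\vec x$ is distributed among the two hemispheres $D_\pm = S^2 \setminus L_0$, and chooses $\rho$ so that the integrand is a function only of the intersection data of the trajectories with $L_0$ and the Hamiltonian isotopy's action within each $D_\pm$ up to a bounded defect. Because any $\psi \in \mathrm{Stab}(L_0)$ preserves the pair $(D_+, D_-)$ and acts as an area-preserving diffeomorphism on each, the post-composition $\phi\psi$ changes $F(\phi)$ only by a bounded error. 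This is precisely the sort of ``cocycle modulo bounded error on a subgroup'' behavior that the paper's generalized-functional framework should axiomatize.

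Finally, to realize $F(\phi_n) \to \infty$, take $\phi_n$ to be the time-one map of a Hamiltonian that rotates a small subarc of $L_0$ through an annular disk disjoint from the rest of $L_0$ a total of $n$ times, producing an equator $L_n$ that winds $n$ times around a disk. For typical configurations $\vec x$ with some points inside and some outside this annulus, the braid $\beta(\vec x, \{\phi_{n,t}\})$ contains $n$ half-twists around the linking arc, so $\rho$ grows linearly in $n$, yielding $F(\phi_n) \gtrsim n$. Combined with (i)--(ii), this gives $d_1(L_0, L_n) \gtrsim n \to \infty$, proving the theorem.
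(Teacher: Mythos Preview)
Your overall architecture matches the paper's: a Gambaudo--Ghys type integrated functional $F$ on $\cl G$ that is $L^1$-Lipschitz, bounded on the stabilizer of $L_0$, and unbounded on $\cl G$. But the specific ingredients you propose are in tension with one another, and this is where the actual work lies.

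The central gap is your choice of $\rho$. A ``signature-type'' or ``signed crossing number'' functional on $B_k(S^2)$ will indeed grow linearly on your twist maps $\phi_n$, but it will \emph{also} grow on elements of $\mathrm{Stab}(L_0)$: a Hamiltonian supported in one hemisphere can produce arbitrarily large crossing numbers, so your property (ii) fails. The paper's resolution is to take $\varphi$ a Bestvina--Fujiwara homogeneous quasimorphism on $B_n(S^2)$ that \emph{vanishes on reducible braids}. Then, for a configuration with at least two points in each hemisphere, any isotopy supported in a single hemisphere yields a reducible braid, and a fragmentation lemma (any $h$ with $h(L)=L$ is $L^1$-close to a product $fg$ with $f$, $g$ supported in opposite hemispheres) gives boundedness on the stabilizer. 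To make this work one must \emph{subtract} from the full configuration-space integral the contribution of tuples with $0$, $1$, $n-1$, or $n$ points in $D_+$; this is exactly what destroys the quasimorphism property and forces the weaker ``para-morphism'' inequality $|\Phi(gf)-\Phi(f)-\Phi(g)|\le C+D|g|_1$.

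This choice of $\varphi$ then breaks your unboundedness argument: your $\phi_n$ produces braids that are powers of a twist of some strands around others, hence reducible, so $\varphi$ vanishes on them. The paper instead uses an Ishida-type construction: realize a fixed pseudo-Anosov pure braid $\beta$ with $\varphi(\beta)\neq 0$ by a diffeomorphism built from disks placed in \emph{both} hemispheres, and then run a polynomial-in-areas argument to show that the limit $\liminf_k \Phi(f^k)/k$ is nonzero for a suitable rescaling. Neither the fragmentation step nor this non-triviality step is visible in your sketch, and without them the argument does not close.
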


It is easy to see (see for example \cite[Remark 1.1]{BS-sphere}) that 
in this case the diameter of $\cl E$ in $d_p$ for all $p \geq 1$ is also infinite. 

Moreover, using the same key ideas, but with additional technical modifications, we generalize this result as follows. 
Let $\Sigma$ be a closed oriented surface 
of positive genus with an area form $\omega$. Let $\G_\Sigma$ be 
the identity component of the group of area-preserving diffeomorphisms of $\Sigma$.

\begin{thm}\label{thm: generalized}
Let $\Sigma$ be a closed oriented surface and $L \subset \Sigma$ a smooth embedded separating curve. Then the orbit $\cl E_L$ of $L$ under the action of $\cl G_\Sigma$ equipped with the metric $d_1$ has infinite diameter. 
\end{thm}

\begin{rmk}
 Let $\Ham(\Sigma)<\G_\Sigma$ the group of Hamiltonian diffeomorphisms of $\Sigma$. 
 Theorem \ref{thm: generalized} implies that
 the diameter of $\cl E_L$ equipped with the $d_1$-metric coming from $\Ham(\Sigma)$ is infinite.
\end{rmk}

As a special case, this implies that the space $\cl E_a$ of all embedded loops bounding a {\em disk} of area 
$a \in (0, \mathrm{Area}(\Sigma))$ in every closed oriented surface $\Sigma$ with an area form (see also Remark \ref{rmk: compact}) with $d_1$ has infinite diameter. We call these loops $a$-equators. In fact, for $\Sigma = S^2,$ the argument proving Theorem \ref{thm: diam eq} extends verbatim to $\cl E_a,$ with the 
caveat that for the sphere $\cl E_a = \cl E_{A-a}$ where $A = \mrm{Area}(S^2).$

Note that the definition of the $L^1$-metric depends on the choice of an auxiliary Riemannian metric on the surface. However, our main results are independent of this choice. The same remark applies to all the $L^p$-metrics. Furthermore, in this paper we consider unparametrized embedded loops, however it is immediate from the proof of Lemma \ref{lma: frag} that all results apply to the parametrized case as well.

The methods of the proof involve functionals obtained from quasi-morphisms on braid groups in the spirit of \cite{GG-commutators}, see for instance \cite{BS-sphere, BMS-SM}. However, our functionals are {\em not} quasi-morphisms on $\cl G$ as it has usually been in previous work in the field. Instead, they satisfy weaker properties, which, however, are sufficient for our purposes. Thus, we introduce the notion of a {\em para-morphism} on a metric group.
We now outline our approach in slightly greater detail.

\subsection{Outline of the proof}

First, we show, using integration along the configuration space of points in $S^2,$ and crucially \cite{BMS-SM} for the first point, the following result.

\begin{prop}\label{prop: prop}
For each equator $L$ there is a functional $\Phi:\cl G\to\R$ 
satisfying the following properties for $A, B,C,D$ depending only on $L$ and the geometry of $S^2$:\\
\begin{enumerate}
\item\label{p1} $|\Phi(gf) - \Phi(f)-\Phi(g)|\leq C + D |g|_1$ for all $f,g \in \cl G$
\item\label{p2} $\ol{\Phi}(f) = \liminf_{k \to \infty} \Phi(f^k)/k$ is well-defined on $\cl G$ and not identically zero
\item\label{p3} $|\Phi(h)| \leq B$ for all $h \in \cl G$ such that $h(L) = L,$ and in particular $\ol{\Phi}(h)=0$
\item\label{p4} $|\Phi(f)| \leq A (|f|_1 + 1)$ for all $f \in \cl G$
\end{enumerate}
\end{prop}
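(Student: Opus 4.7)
The plan is to construct $\Phi$ by a Gambaudo--Ghys type averaging over the configuration space $\mrm{Conf}_n(S^2)$, with the integrand being a homogeneous quasi-morphism $r\colon P_n(S^2)\to\R$ on the pure braid group built from the signature-type invariant introduced in [BMS-SM]. The integration measure $\mu$ should be concentrated on configurations whose $n$ points split as $k$ points in one hemisphere $D_+$ and $n-k$ points in the complementary hemisphere $D_-$ cut out by $L$. This setup makes the integrand sensitive to strands crossing $L$, which ultimately yields property~(\ref{p3}), and fits into the standard framework that makes properties~(\ref{p1}) and~(\ref{p4}) accessible.

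First, for each $f\in\cl G$ and a chosen isotopy $\{\phi_t\}_{t\in[0,1]}$ from $\id$ to $f$, I would define the pure braid $\beta_f(x)\in P_n(S^2)$ traced out by the strands $t\mapsto\phi_t(x_i)$ from a configuration $x=(x_1,\dots,x_n)$, and set $\Phi(f)=\int r(\beta_f(x))\,d\mu(x)$. Independence of the choice of isotopy, up to a bounded error, follows since $\pi_1(\cl G)$ is finite and $r$ is homogeneous. Property~(\ref{p4}) then follows from the word-length Lipschitz estimate for $r$ on $P_n(S^2)$, which is standard for signature-type quasi-morphisms, combined with the fact that the $\mu$-average pairwise crossing number of strands in $\beta_f(x)$ is controlled by the $L^1$-length of $\{\phi_t\}$; taking infimum over isotopies gives the bound.

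Property~(\ref{p1}), the para-morphism condition, would come from comparing $\beta_{gf}(x)$ with the concatenation $\beta_g(x)\cdot\beta_f(g^{-1}x)$: the quasi-morphism defect of $r$ contributes the $C$ term, while a change of variables from $g^{-1}x$ to $x$ in the $\beta_f$-integral picks up an error bounded by the Lipschitz constant of $r$ times the $\mu$-average displacement of $g$, and the latter is at most $|g|_1$, producing the $D\cdot|g|_1$ term. Property~(\ref{p2}) is then a formal consequence of (\ref{p1}) and (\ref{p4}): iteratively applying (\ref{p1}) with $g=f^{k-1}$ yields $|\Phi(f^k)-k\Phi(f)|\leq k(C+D|f|_1)$, so $\Phi(f^k)/k$ is bounded and $\ol{\Phi}$ is everywhere finite; nontriviality on an explicit rotation-like element is verified by a direct braid computation detecting that $r$ is unbounded on the iterates of some pure braid, as in previous Gambaudo--Ghys applications.

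The main obstacle is establishing property~(\ref{p3}). Here one exploits that a diffeomorphism $h$ with $h(L)=L$ admits an isotopy through $L$-preserving area-preserving diffeomorphisms starting from $\id$. For a $\mu$-typical configuration $x$, the strands of $\beta_h(x)$ then remain confined to $D_+\sqcup D_-$, so $\beta_h(x)$ belongs to the image in $P_n(S^2)$ of $\pi_1(\mrm{Conf}_k(D_+))\times\pi_1(\mrm{Conf}_{n-k}(D_-))$. The technical core of the proposition, crucially drawing on [BMS-SM], is that the chosen quasi-morphism $r$ restricts to a \emph{bounded} function on this subgroup. Verifying this boundedness---that $r$ is genuinely tuned to distinguish $L$-crossing braids from $L$-preserving ones---is the most delicate step, and it is what dictates the specific choice of $r$ among the many available quasi-morphisms on $P_n(S^2)$.
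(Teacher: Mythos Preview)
Your architecture is right---Gambaudo--Ghys averaging over configurations split between the two hemispheres---and Properties~(\ref{p1}) and~(\ref{p4}) do follow essentially as you describe, with [BMS-SM] supplying the $L^1$-Lipschitz estimate $\int|\varphi(\gamma(g,x))|\,dx\leq C'+D'|g|_1$. But two steps in your outline do not work as stated.

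\textbf{Property~(\ref{p3}).} You assert that $h$ with $h(L)=L$ admits an isotopy in $\cl G$ through $L$-preserving maps. This is not free: on $S^2$ the map $h$ may swap the two hemispheres (then $h|_L$ has degree $-1$ and no such isotopy exists), and even after fixing this by a rotation the claim requires proof. The paper does \emph{not} produce such an isotopy. Instead it proves a fragmentation lemma: any $h$ with $h(L)=L$ satisfies $d_1(h,fg)<K$ for some $f\in\Ham_c(D_+)$, $g\in\Ham_c(D_-)$, with $K$ depending only on $L$. This is obtained by lifting the circle isotopy $h|_L$ to a Hamiltonian on $T^*L$, cutting it off in a $\delta$-neighborhood, and checking that the $L^1$-length of the cutoff flow is $O(\delta)$. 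Property~(\ref{p3}) then follows from Properties~(\ref{p1}),~(\ref{p4}) together with the bound $|\Phi(fg)|\leq C_1$ for such products.

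More importantly, you misidentify where the boundedness of $r$ on the ``split'' subgroup comes from. It is \emph{not} a feature of signature-type quasimorphisms from [BMS-SM]; rather, the paper takes $\varphi\in Q_{\mathrm{BF}}(B_n(S^2))$, a Bestvina--Fujiwara quasimorphism that vanishes on all \emph{reducible} braids. For $f$ supported in $D_+$ and a configuration with $1<k<n-1$ points in $D_+$, the braid $\gamma(f,x)$ is reducible (the curve $\partial D_+$ reduces its push), so $\varphi$ vanishes on it, not merely is bounded. So [BMS-SM] is the input for~(\ref{p1}),~(\ref{p4}); Bestvina--Fujiwara is the input for~(\ref{p3}). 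Your proposal has these reversed.

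\textbf{Property~(\ref{p2}).} Boundedness of $\Phi(f^k)/k$ is indeed formal (apply~(\ref{p1}) with $g=f$, not $g=f^{k-1}$ as you wrote---the latter gives an $O(k^2)$ error). But nontriviality of $\ol\Phi$ is not a ``direct braid computation'': since $\Phi$ is not a quasimorphism, $\ol\Phi(f)\neq 0$ does not follow from $\varphi(\beta)\neq 0$ alone. The paper uses the Ishida construction to build $f_r$ from disks $D_{1,r},\ldots,D_{4,r}$ of areas $ra_i$, shows the main term of $\ol\Phi_4(f_r)$ is a nontrivial homogeneous polynomial $P(a_1,\ldots,a_4)$ in the disk areas, and then lets $r\to 1/a$ so the complementary region (and its contribution) shrinks to zero. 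This scaling argument is essential and absent from your sketch.
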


\begin{rmk}Note that if $D = 0$ then $\Phi$ is a quasi-morphism on $\cl G.$ We therefore call a functional satisfying the properties \ref{p1}, \ref{p3}, \ref{p4} an $L^1$ {\em para-morphism}. In addition, Property \ref{p1} implies that $\delta\Phi$  is a $1$-bounded group two-cocycle and defines a certain cohomology class, see 
\cite{Gal-Kedra-2cocycle, Gal-Kedra-refinement}. The notion of a para-morphism can be generalized to the general framework on a normed group $(G,\nu),$ where $\nu$ is the norm, and we expect it to yield further applications in geometry and dynamics. We remark that we construct such $\Phi$ using surface braid groups on $n$ strands for each $n>3$ and denote it by $\Phi_n.$ \end{rmk}

We now explain how Proposition \ref{prop: prop} implies Theorem \ref{thm: diam eq}.

\begin{proof}[Proof of Theorem \ref{thm: diam eq}]

Let $L$ be the standard equator and $\Phi$ provided by Proposition \ref{prop: prop}. Now suppose that $g(L) = f(L)$ for $f, g \in \cl G.$ Then $h = g^{-1} f$ satisfies $h(L) = L$. Therefore, 
$$|\Phi(h)| \leq B$$ 
by Property \ref{p3}. From Properties \ref{p1} and \ref{p4} we obtain
\[|\Phi(f)| \leq |\Phi(h)| + |\Phi(g^{-1})| + C+ D|g^{-1}|_1 \leq A+B+C + (A+D) |g|_1.\] Taking infimum over all $g \in \cl G$ with $g(L) = f(L)$ yields \begin{equation}\label{eq: bound} |\Phi(f)| \leq E + F\, d_1(L,f(L))\end{equation} 
for constants 
$$E = A+B+C, F = A+B.$$ 
Now Property \ref{p2} implies that there is a sequence $f_i \in \cl G$ such that 
$$|\Phi(f_i)| \to \infty$$
as $i \to \infty$ and hence by \eqref{eq: bound} 
$$d_1(L, f_i(L)) \to \infty$$ 
as well. This finishes the proof. \end{proof}

\subsection{Organization of the paper}
In Section \ref{sec: prop} we prove Proposition \ref{prop: prop} and in Section \ref{sec: other} we explain how the arguments generalize to prove an analogue of this proposition for $\cl E_L,$ where $\Sigma$ is a closed oriented surface and $L \subset \Sigma$ is an embedded separating curve and hence prove Theorem \ref{thm: generalized}.

\section*{Acknowledgments}
MB was partially supported by the Israeli Science Foundation grant 823/23 and CRM-Simons fellowship. 
MB wishes to express his gratitude to CRM for the support and excellent working conditions.

ES was partially supported by an NSERC Discovery grant, a Courtois chair in fundamental research, a Fonds de Recherche du Qu\'ebec Nature et Technologies teams grant, and a Sloan Research Fellowship. This work was also partially supported by the National Science Foundation
under Grant No. DMS-1926686. ES thanks the Institute for Advanced Study for an excellent research atmosphere.

\section{Preliminaries}
Recall that an alternative way to define the $L^1$-length is \[\ell_1(\{\phi_t\}) = \int_0^1\int_\Sigma |X_t| \omega\,dt\] where $X_t$ is the time-dependent vector field generating $\phi_t$ and $\omega$ is the area form on $\Sigma$. 
Note that for a constant $C_0$ depending only on the geometry of $\Sigma$ we have 
 \[\ell_1(\{\phi_t\}) \leq C_0 \int_0^1 \int_\Sigma |d(H_t)| \omega\,dt,\] where $H_t(x) = H(t,x)$ is the time-dependent Hamiltonian generating $X_t$.
 
\begin{prop}\label{prop: non-deg} Let $L$ be an embedded separating loop on a compact orientable surface $\Sigma$. Let $\cl E_L$ be the orbit of $L$ under the identity component $\cl G$ of the group of area-preserving diffeomorphisms.
Then the metric $d_1$ on $\cl E_L$ induced by the $L^1$ metric on $\cl G$ is non-degenerate.
\end{prop}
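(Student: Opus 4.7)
The plan is to produce, for each $L' \in \cl E_L$ distinct from $L$, a strictly positive lower bound on $|\phi|_1$ valid for every $\phi \in \cl G$ with $\phi(L) = L'$. The mechanism is optimal transport: since $L$ is separating, area preservation forces a definite amount of mass to cross $L$, and the $L^1$-length of any area-preserving isotopy realizing $\phi$ dominates the Wasserstein-$1$ cost of that mass transfer.

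First I fix a Riemannian metric on $\Sigma$ compatible with $\omega$, with distance $d_\Sigma$, and let $D$ be one of the two closed components of $\Sigma$ cut along $L$. For any isotopy $\{\phi_t\} \subset \cl G$ from $\id$ to $\phi$ generated by $X_t$, the map $x \mapsto (x, \phi(x))$ pushes $\chi_D\, \omega$ forward to a transport plan between $\chi_D\, \omega$ and $\chi_{\phi(D)}\, \omega = \phi_*(\chi_D\, \omega)$, yielding
\[W_1(\chi_D\, \omega,\, \chi_{\phi(D)}\, \omega) \;\leq\; \int_D d_\Sigma(x, \phi(x))\, \omega \;\leq\; \int_0^1 \int_D |X_t(\phi_t(x))|\, \omega(x)\, dt.\]
Changing variable $y = \phi_t(x)$ and using $\phi_t^*\omega = \omega$, the right-hand side becomes $\int_0^1 \int_{\phi_t(D)} |X_t|\, \omega\, dt \leq \ell_1(\{\phi_t\})$, and taking the infimum over isotopies gives $W_1(\chi_D\, \omega,\, \chi_{\phi(D)}\, \omega) \leq |\phi|_1$.

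Next I extract a lower bound on the left-hand side depending only on $L$ and $L'$. Any $\phi \in \cl G$ with $\phi(L) = L'$ maps components of $\Sigma \setminus L$ to components of $\Sigma \setminus L'$, so $\phi(D) \in \{\overline{D_1'}, \overline{D_2'}\}$ where $D_1', D_2'$ are the two closed components cut by $L'$; area preservation further restricts to those $D_j'$ with $\mrm{Area}(D_j') = \mrm{Area}(D)$. Since $L' \neq L$, no such $D_j'$ coincides with $D$ modulo a set of measure zero, so $\chi_D\, \omega$ and $\chi_{D_j'}\, \omega$ are distinct finite Borel measures of equal total mass on the compact metric space $(\Sigma, d_\Sigma)$. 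Non-degeneracy of $W_1$ on such measures therefore yields
\[c(L,L') := \min_{j \,:\, \mrm{Area}(D_j') = \mrm{Area}(D)} W_1(\chi_D\, \omega,\, \chi_{D_j'}\, \omega) > 0.\]
Combining the two bounds gives $|\phi|_1 \geq c(L, L')$ for every $\phi \in \cl G$ with $\phi(L) = L'$, and taking the infimum over such $\phi$ yields $d_1(L, L') \geq c(L, L') > 0$.

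The conceptual point I expect to be the heart of the argument is recognizing that area preservation obstructs displacing $L$ with small $L^1$-energy by forcing a definite mass transfer across $L$. The remainder is routine: the Monge--Kantorovich estimate that the $W_1$ cost of a vector-field-driven flow is controlled by the $L^1$-norm of the generating vector field, and the standard non-degeneracy of $W_1$ on distinct finite Borel measures of equal mass on a compact metric space.
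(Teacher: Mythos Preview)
Your argument is correct, but it is genuinely different from the paper's. The paper proceeds by a direct displaced-ball estimate: writing $\Sigma\setminus L = A \sqcup B$, one checks that $\phi(L)\neq L$ forces $\phi(A)\cap B\neq\emptyset$; choosing a metric ball $D(z,\varepsilon)$ with $D(z,2\varepsilon)\subset \phi(A)\cap B$, every $x\in D(z,\varepsilon)$ satisfies $d(\phi^{-1}x,x)>\varepsilon$ (since $\phi^{-1}x\in A$ while $D(z,2\varepsilon)\subset B$), and hence $|\phi|_1=|\phi^{-1}|_1\geq \varepsilon\cdot\mrm{Area}(D(z,\varepsilon))$. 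Because $\phi(A)$ is one of at most two components of $\Sigma\setminus L'$, this bound depends only on $L,L'$, and taking the infimum finishes the proof.

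Your optimal-transport route replaces this displaced-ball computation by the Monge--Kantorovich bound $W_1(\chi_D\,\omega,\chi_{\phi(D)}\,\omega)\leq |\phi|_1$ together with non-degeneracy of $W_1$ on finite Borel measures of equal mass. The paper's proof is shorter and entirely self-contained (no Wasserstein machinery), while yours makes the dependence on $L,L'$ alone manifest from the outset, gives a canonical lower bound $c(L,L')$ rather than one depending on a choice of ball, and would transfer verbatim to volume-preserving diffeomorphisms of higher-dimensional manifolds separated by a hypersurface. Either is a perfectly good proof of the proposition.
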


By \cite[Remark 1.1]{BS-sphere}, Proposition \ref{prop: non-deg} applies to all $L^p$ metrics for $p \geq 1$.

\begin{proof}
Let $A$ and $B$ be the two connected components of the complement of $L.$ Then $\phi L \neq L$ 
if and only if $\phi(A) \cap B \neq \emptyset.$ Let $U$ be a connected component of 
the non-empty open set $\phi(A)\cap B$ and $D = D(z, \varepsilon)$ 
be a metric ball of radius $\varepsilon > 0$ in $U,$ such that $D(z, 2\varepsilon)$ is also in $U$. 
Note that $U \subset B$ while $\phi^{-1} U \subset A.$ 
Therefore, $\phi^{-1}(U) \cap U = \emptyset$ and moreover $d(\phi^{-1}x, x) > \varepsilon$ 
for all $x \in D.$ Thus 
$$|\phi|_1 = |\phi^{-1}|_1 \geq \varepsilon \cdot \mrm{Area}(D)$$ 
and hence taking infimum over $\phi$ with $\phi(L) = L'$ we get 
$$d_1(L', L)  \geq \varepsilon \cdot \mrm{Area}(D) > 0.$$
\end{proof}

\section{Proof of Proposition \ref{prop: prop}}\label{sec: prop}
Without loss of generality let $L$ be the standard equator. 
We proceed in a number of steps starting with a definition of the functional $\Phi_n$.

\subsection{Definition of the para-morphism}\label{ssec: paramorphism}
Let $n\in\mathbb{N}$ such that $n>3$, and let $\OP{C}_n(S^2)$ 
be the configuration space of all unordered $n$-tuples of pairwise
distinct points in $S^2$. Recall that the Birman map (see e.g. \cite{Farb-Margalit}):
$$
\OP{Push}\colon B_n(S^2)\to\OP{MCG}(S^2,n),
$$
where 
$$B_n(S^2)=\pi_1(\OP{C}_n(S^2),z)$$ 
is the spherical braid group on $n$ strands
and $\OP{MCG}(S^2,n)$ is the mapping class group of the $n$-punctured sphere, 
is defined as follows: 
let $\alpha(t)$, $t\in[0,1]$, be a loop in $\OP{C}_n(S^2)$ based at $z$ 
and $h_t\in\OP{Diff}(S^2)$ an isotopy such that $h_t(z)=\alpha(t)$. 
We define $\OP{Push}(\alpha):=[h_1]$ where $\alpha$ is the braid represented by the loop $\alpha(t)$.
The braid $\alpha$ is called \emph{reducible} if $\OP{Push}(\alpha)$ is a reducible mapping class.
We denote by $Q_{\OP{BF}}(B_n(S^2))$ the space of homogeneous quasimorphisms
on $B_n(S^2)$ which vanish on reducible braids.
It follows from the celebrated paper by Bestvina and Fujiwara \cite{BestvinaFujiwara} that
the space $Q_{\OP{BF}}(B_n(S^2))$ is infinite dimensional, see \cite[Section 4.A.]{2019-BM1}.

Let $\{f_t\}$ be an isotopy in $\G$ from the identity to $f\in\G$. For $x,y\in S^2$ we define a loop
$\gamma_{x,y}\colon [0,1]\to S^2$ by
$$
\gamma_{x,y}(t):=
\begin{cases}
\alpha_{3t} &\text{ for } t\in \left [0,\frac13\right ]\\
f_{3t-1}(x) &\text{ for } t\in \left [\frac13,\frac23\right ]\\
\beta_{3t-2} & \text{ for } t\in \left [\frac23,1\right ],
\end{cases}
$$
where $\alpha_t$ is a shortest path on $S^2$ from $y$ to $x$,
and $\beta_t$ is a shortest path on $S^2$ from $f(x)$ to $y$.

Let $\OP{X}_n(S^2)$ be the configuration space of all ordered $n$-tuples
of pairwise distinct points in the two-sphere $S^2$. Its fundamental group
$\pi_1(\OP{X}_n(S^2),z)$ is identified with the spherical pure braid group $P_n(S^2),$ where $z=(z_1,\ldots,z_n)$ in $\OP{X}_n(S^2)$ is a base point.
For almost every $x=(x_1,\ldots,x_n)\in \OP{X}_n(S^2)$ the
$n$-tuple of loops $(\gamma_{x_1,z_1},\ldots,\gamma_{x_n,z_n})$ is
a based loop in the configuration space $\OP{X}_n(S^2)$.
Let 
$$\gamma(\{f_t\},x)\in P_n(S^2)=\pi_1(\OP{X}_n(S^2),z)$$
be the element represented by this loop, and let
$$\varphi\colon P_n(S^2)\to \B R$$ 
be a homogeneous quasimorphism.
Since the group $\pi_1(\OP{Ham}(S^2))$ is isomorphic to $\Z/2\Z$, the number
$\varphi(\gamma(\{f_t\},x))$ does not depend on the choice of the isotopy $\{f_t\}$ and from now on
is denoted by $\varphi(\gamma(f,x))$. Note that for $f,g\in\G$ we have the following cocycle condition:
\begin{equation}\label{eq:cocycle}
\varphi(\gamma(gf,x))=\varphi(\gamma(g,f(x))\cdot\gamma(f,x)).
\end{equation}
Denote by $D_+$ and $D_-$ the open Northern and Southern hemisphere respectively. Consider the subspace $\OP{X}_n(D_+\cup D_-)\subset\OP{X}_n(S^2)$
consisting of those $n$-tuples points in $S^2$ where either all points lie in $D_+,$ or all points lie in $D_-$, or exactly $n-1$ points lie in $D_+$ and one point in $D_-$, or exactly $n-1$ points lie in $D_-$ and one point in $D_+$. Let $\varphi\in Q_{\OP{BF}}(B_n(S^2))$ a nontrivial homogeneous quasimophism. We define 
$$\Phi_n: \cl G \to \R$$
as follows:
\begin{equation}\label{eq:paramorphism}
\Phi_n(f):=
\int\limits_{\OP{X}_n(S^2)}\varphi(\gamma(f,{x}))d{x}\quad-\int\limits_{\OP{X}_n(D_+\cup D_-)}\varphi(\gamma(f,{x}))d{x}.
\end{equation}

\subsection{Property \ref{p1}} Let $0<k<n$ and $\OP{X}_{n,k}(D_+)\subset \OP{X}_n(S^2)$ 
the subspace where in each $n$-tuple of points exactly $k$ points lie in $D_+$. As usual, we denote by $D_\varphi$ the defect of the quasimorphism $\varphi$.
Now,
\begin{align*} 
&|\Phi_n(gf) - \Phi_n(f)-\Phi_n(g)|\leq \sum_{k=2}^{n-2}\int\limits_{\OP{X}_{n,k}(D_+)}|\varphi(\gamma(gf,{x}))-\varphi(\gamma(g,{x}))-\varphi(\gamma(f,{x}))|dx \leq \\ 
&\sum_{k=2}^{n-2}\int\limits_{\OP{X}_{n,k}(D_+)}|\varphi(\gamma(gf,{x}))-\varphi(\gamma(g,{f(x)}))-\varphi(\gamma(f,{x}))|dx+\\
&\sum_{k=2}^{n-2}\int\limits_{\OP{X}_{n,k}(D_+)}|\varphi(\gamma(g,{f(x)}))-\varphi(\gamma(g,{x}))|dx\leq\\
&\vol(\OP{X}_n(S^2))D_\varphi+2\int\limits_{\OP{X}_n(S^2)}|\varphi(\gamma(g,{x}))|dx\leq
\vol(\OP{X}_n(S^2))D_\varphi+2(C'+D'|g|_1),
\end{align*}
where the last inequality follows immediately from \cite[Theorem 1]{BS-sphere} 
(see also \cite[Theorem 1]{BMS-SM}). By setting 
$$C:=(\vol(\OP{X}_n(S^2))D_\varphi+2C'), D:=2D'$$ 
we obtain Property \ref{p1}.

\subsection{Property \ref{p4}} Let $f\in\G$. Then as before
\begin{equation*} 
|\Phi_n(f)|\leq\sum_{k=2}^{n-2}\int\limits_{\OP{X}_{n,k}(D_+)}|\varphi(\gamma(f,{x}))|dx\leq
\int\limits_{\OP{X}_n(S^2)}|\varphi(\gamma(f,{x}))|dx\leq C'+D'|f|_1.
\end{equation*}
By setting $A:=\max\{C',D'\}$ we obtain Property \ref{p4}.

\subsection{A fragmentation result}

\begin{lma}\label{lma: frag}
Let $L$ be an embedded contractible loop or a separating loop in a surface $\Sigma$ and $A,B$ connected open sets such that $\Sigma\setminus L = A \sqcup B$. Suppose that $\phi$ is the time-one map of a compactly supported Hamiltonian isotopy such that $\phi(L) = L$. Then there is a constant $K$ depending on $L$ only, such that $d_1(\phi, fg) < K$ for some $f \in \Ham_c(A)$ and $g \in \Ham_c(B)$.
\end{lma}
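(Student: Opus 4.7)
The plan is to factor $\phi$ as $\phi=\psi\cdot h$ in $\Ham_c(\Sigma)$, where $\psi$ is supported in a fixed tubular neighborhood of $L$ and has $|\psi|_1\leq K$ for some $K$ depending only on $L$, while $h$ is the identity on some (smaller, $\phi$-dependent) neighborhood of $L$. Any such $h$ then decomposes trivially as $h=fg$ with $f\in\Ham_c(A)$ and $g\in\Ham_c(B)$, by restricting $h$ to each side and extending by the identity across $L$. This would immediately yield $d_1(\phi,fg)\leq |\phi\cdot(fg)^{-1}|_1=|\psi|_1\leq K$, giving the lemma.

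To build $\psi$, I would first fix once and for all a Darboux tubular neighborhood $N\cong L\times(-\delta,\delta)$ of $L$ with area form $ds\wedge dt$, where $\delta>0$ depends only on $L$. Since $L$ is contractible $A$ is a disk, so by topology $\phi(A)=A$ automatically unless $B$ is also a disk (the sphere case with equal-area hemispheres); in that exceptional case I would pre-compose $\phi$ with a fixed Hamiltonian involution of $\Sigma$ swapping $A$ and $B$, whose $L^1$-norm depends only on $L$. So I may assume $\phi(A)=A$ and $\phi(B)=B$.

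The restriction $\varphi:=\phi|_L$ is then an orientation-preserving diffeomorphism of $L\cong S^1$, with displacement along $L$ bounded by $\tfrac12\mrm{length}(L)$. I would construct $\psi_1\in\Ham_c(N)$ with $\psi_1|_L=\varphi^{-1}$ by realizing a Hamiltonian isotopy from $\id_L$ to $\varphi^{-1}$ in $\Diff_+(L)$ as the boundary trace of a compactly supported Hamiltonian flow on $N$ (a standard extension). Uniform bounds on the displacement give $|\psi_1|_1\leq K_1$ with $K_1$ depending only on $L$. Setting $\phi_1:=\psi_1\phi$, we get $\phi_1|_L=\id$. Using continuity of $\phi_1$, I would now pick $0<\varepsilon<\delta/2$ (possibly depending on $\phi$) small enough that $\phi_1(L\times[-\varepsilon,\varepsilon])\subset L\times(-\delta/2,\delta/2)$, and construct $\psi_2\in\Ham_c(N)$ by a Moser-type interpolation that equals $\phi_1$ on $L\times[-\varepsilon/2,\varepsilon/2]$ and equals the identity outside $N$. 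On the inner annulus the displacement is at most $\delta$ and the area is at most $\varepsilon\cdot\mrm{length}(L)$; on the interpolation annulus the displacement is at most $\mrm{diam}(N)$ and the area is at most $\area(N)$; both contributions give $|\psi_2|_1\leq K_2$ with $K_2$ depending only on $L$. Then $\psi:=\psi_1^{-1}\psi_2$ and $h:=\psi^{-1}\phi=\psi_2^{-1}\phi_1$ provide the desired factorization, with $|\psi|_1\leq K_1+K_2=:K$ and $h$ identity on a neighborhood of $L$.

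The main obstacle I anticipate is the construction of $\psi_2$ with $L^1$-norm bounded independently of $\phi$: although $\phi_1$ fixes $L$ pointwise, its transverse jet along $L$ can be arbitrarily wild, so naively straightening $\phi_1$ to the identity near $L$ could have unbounded cost. The key observation is that, once the boundary diffeomorphism has been killed by $\psi_1$, the inner collar $L\times[-\varepsilon,\varepsilon]$ may be chosen as small as needed so that $\phi_1$ maps it inside the fixed collar $N$; the Moser interpolation then lives entirely inside $N$, and both displacement and area are controlled by the fixed geometry of $N$, hence by $L$ alone.
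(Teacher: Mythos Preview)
Your overall plan parallels the paper's, but the $\psi_2$ step has a genuine gap. You bound $|\psi_2|_1$ by (pointwise displacement of $\psi_2$) $\times$ (area of its support); however, this quantity is a \emph{lower} bound for the $L^1$-norm, not an upper one. The $L^1$-norm is an infimum over isotopies of integrated trajectory length, and trajectories can be much longer than the endpoint displacement. Concretely, suppose $\phi_1(s,t)=(s+Mt,t)$ near $L$ in coordinates $N\cong S^1\times(-\delta,\delta)$ with $M$ large (such $\phi_1$ certainly arise: take $\phi$ itself to be this shear near $L$, so $\varphi=\id$ and $\psi_1=\id$). Then your criterion $\phi_1(L\times[-\varepsilon,\varepsilon])\subset L\times(-\delta/2,\delta/2)$ is satisfied for every $\varepsilon<\delta/2$, independently of $M$. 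Any $\psi_2\in\Ham_c(N)$ agreeing with $\phi_1$ on $|t|\leq\varepsilon/2$ and with $\id$ near $|t|=\delta$ must absorb a twist of total angle $\sim M\varepsilon$ in the interpolation region, and the natural isotopy realizing it has $\ell_1$ of order $M\varepsilon\delta$, unbounded in $M$. The displacement of $\psi_2$ nevertheless stays below $\mrm{diam}(N)$, so your estimate says nothing. Your ``key observation'' does not help: confining the interpolation to $N$ controls displacement and support, not trajectory length.

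The paper avoids $\psi_2$ altogether. It only asks the correction to agree with $\phi$ \emph{on} $L$, not near it: lifting the circle isotopy $\{f_t\}$ to a $1$-homogeneous Hamiltonian on $T^*L$ and cutting off at width $\delta$ produces $\psi^\delta$ with $\psi^\delta|_L=\phi|_L$ and, crucially, $\ell_1(\{\psi^\delta_t\})=O(\delta)\to 0$ (this is the content of the Claim in the paper's proof). Then $(\psi^\delta)^{-1}\phi$ fixes $L$ pointwise and therefore already factors as $fg$ with $f,g$ supported in $\overline{A},\overline{B}$ and equal to the identity on $L$. The final passage to $f'\in\Ham_c(A)$, $g'\in\Ham_c(B)$ is achieved by a small Liouville rescaling towards the skeleta of $A,B$, at $L^1$-cost $<\varepsilon$. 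Thus the paper never attempts to straighten a neighborhood of $L$, and the unbounded-jet obstacle you correctly anticipate simply does not arise.
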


\begin{proof} First $f=\phi|_L: L \to L$ is a diffeomorphism connected to the identity or not. If $\Sigma \neq S^2$ then $f$ is connected to the identity, as it has degree $1$. We can show this as follows. In this case, the two connected components of $\Sigma \setminus L$ are not isotopic to each other inside $\Sigma$ and therefore are both preserved under $\phi$. Indeed, either the two components are not diffeomorphic or they have positive genus, in which case their inclusions induce maps with distinct images on the first homology group. In particular we may consider $\phi$ as a self-map of the pair $(D,L)$ where $D$ is the closure of one connected component of $\Sigma \setminus L$. Now $\deg(f) =1$ as $\deg(\phi)=1$ and the isomorphism $H_2(D,L;\Z) \to H_1(L;\Z)$ coming from the long exact sequence of a pair is functorial.

If $\Sigma = S^2$ and $f$ is not connected to the identity, we compose $\phi$ with a rotation $\rho$ of $S^2$ by angle $\pi$ along an axis passing through $L.$ Then $\phi'=\phi\rho$ satisfies $\phi'(L) = L,$ $f'=\phi'|_L:L \to L$ isotopic to the identity and $|\phi'|_1 \leq |\phi_1|+ c$ for $c$ depending on the geometry of $S^2$ only. 

Therefore, we may assume that $f$ is isotopic to the identity. Let $\{f_t\}$ for $f_t \in \mrm{Diff}(L)$ be an isotopy with time-one map $f.$ Lift $\{f_t\}$ to a Hamiltonian isotopy $\{\psi_t\}$ of $T^*L$ with Hamiltonian $F(t,x)$ $1$-homogeneous in the momentum variable and vanishing on the zero section $L$ for all $t.$ Identify $L$ with $S^1$ and $T^*L$ with $\R \times S^1.$ Let $\chi: \R \to [0,1]$ 
be a cutoff function such that $\chi(x) = 1$ for $|x| \leq 1/3,$ $\mrm{supp}(\chi) \subset (-1,1)$ and $|\chi'| \leq 2$ everywhere. For $\delta > 0$ set $\chi_\delta(x) = \chi(x/\delta).$ Then $\chi_\delta(x) = 1$ for  $|x| \leq \delta/3,$ $\mrm{supp}(\chi_\delta) \subset (-\delta,\delta)$ and $|\chi'| \leq 2\delta^{-1}$ everywhere. Finally, consider $\chi_\delta$ as a function on $\R \times S^1 \cong T^*L,$ set 
$$G(t,x) = F(t,x) \chi_\delta(x),$$ 
and let $\{\psi^\delta_t\}$ be the flow of $G$. Then, for all $t,$ $\psi^\delta_t|_L = f_t$ and $\mrm{supp}(\psi^\delta_t) \subset (-\delta, \delta) \times S^1.$ The key property of this construction is the following.


\begin{clm}
As $\delta \to 0,$ $\ell_1(\{\psi_t^\delta\}) \to 0.$
\end{clm}

Let us prove this statement. As $F(t,x) = 0$ for all $x = (0, q)$ and all $t,$ there exists a function $H(t,x)$ such that $F(t,p,q) = p H(t, p, q).$ Hence $\chi_\delta(p) F(t, p,q) = \chi_\delta(p) p H(t, p,q)$. 
Therefore, \[d(\chi_\delta(p) F(t, p,q)) = \chi'_\delta(p) p H(t,p,q) dp + \chi_\delta(p) H(t,p,q) dp + \chi_\delta(p)p dH_t(p,q).\] Now as $\delta \to 0,$ taking the size of the support of $\chi_\delta$ into account \[\chi'_\delta(p) p H(t,p,q) dp = O(\delta^{-1} \delta) = O(1),\] \[\chi_\delta(p) H(t,p,q) dp = O(1),\] \[\chi_\delta(p)p dH_t(p,q) = O(\delta)\] and therefore \[\ell_1(\{\psi^\delta_t\}) = 2 \delta (O(1)+O(1)+O(\delta)) = O(\delta).\] This finishes the proof.

Finally, identify a neighborhood $U$ of $L$ in $\Sigma$ with a neighborhood $V$ of $L$ in $T^*L.$ 
Fix $\varepsilon > 0$ and consider $\delta$ sufficiently small so that $\mrm{supp}(\psi^\delta_t)$ is contained in $V$ for all 
$t$ and $\{\psi_t^\delta\}$ corresponds to an isotopy $\{\phi_t^\delta\}$ on $\Sigma$ supported in $U$ such that $$|\phi^\delta_1| \leq \ell(\{\phi^\delta_t\}) < \varepsilon.$$ 
Then $(\phi^\delta_1)^{-1} \phi = f g$ for some $f, g \in \cl G$ 
such that $\mrm{supp}(f) \subset \overline{A}$ and $\mrm{supp}(g) \subset \overline{B}$ 
and $d_1(\phi, (\phi^\delta_1)^{-1} \phi) = |\phi^\delta_1| < \varepsilon.$ Finally, 
rescaling $f, g$ a little by the Liouville flow in a neighborhood of $\overline{A}$ and $\overline{B}$ 
towards the skeleta of $A, B$ we obtain $f', g'$ such that $\mrm{supp}(f') \subset {A}$ 
and $\mrm{supp}(g') \subset {B}$ and $d_1(fg, f'g')<\varepsilon.$ 
In total, we obtain 
$$d_1(\phi, f'g') < 2\varepsilon$$ 
for $\phi|_L$ isotopic to the identity. Setting $K:=c+2\varepsilon$, 
where we set $c=0$ whenever $\Sigma \neq S^2,$ we conclude the proof of the lemma for a general $\phi$.
\end{proof}

\subsection{Property \ref{p3}} Let $h\in\G$ such that $h(L)=L$. First, we show that
there exists a constant $C_1\in\mathbb{R}$ such that for every $f\in\Ham_c(D_+)$ and $g\in\Ham_c(D_-)$ we have
$|\Phi_n(fg)|\leq C_1$.

Let $\{f_t\}$ be an isotopy in $\Ham_c(D_+)$ between $f_0=\Id$ and $f_1=f$, 
and $\{g_t\}$ be an isotopy in $\Ham_c(D_-)$ between $g_0=\Id$ and $g_1=g.$ Then we have the following braid identity
$$\gamma(\{f_t * g_t\},x)=\gamma(\{f_t\},g(x))\cdot\gamma(\{g_t\},x).$$
Since $\{f_t\}\in\Ham_c(D_+)$ and $\{g_t\}\in\Ham_c(D_-)$, then
for every $1<k<n-1$ and each $x\in\OP{X}_{n,k}(D_+)$ the braids $\gamma(\{f_t\},g(x))$ and $\gamma(\{g_t\},x)$ are reducible. Hence 
$\varphi(\gamma(f,g(x)))=0$ and $\varphi(\gamma(g,x))=0$. It follows that 
$$|\varphi(\gamma(fg,x))|\leq D_\varphi.$$ 
Setting $C_1:=D_\varphi\vol(\OP{X}_n(S^2)),$ we obtain
\begin{equation}\label{eq: fg}|\Phi_n(fg)|\leq\sum_{k=2}^{n-2}\int\limits_{\OP{X}_{n,k}(D_+)}|\varphi(\gamma(fg,{x}))|dx\leq C_1.\end{equation}

We proceed with the proof of Property \ref{p3}. It follows from Lemma \ref{lma: frag} that there 
is a constant $K$ depending on $L$ only, such that $d_1(h, fg)<K$ for some $f\in\Ham_c(D_+)$ and $g\in\Ham_c(D_-)$.
Now,
\begin{align*} 
&|\Phi_n(h) - \Phi_n(fg)|\leq|\Phi_n(h) - \Phi_n(fg)-\Phi_n(h(fg)^{-1})|+|\Phi_n(h(fg)^{-1})|\leq\\
&C+D|h(fg)^{-1}|_{1}+|\Phi_n(h(fg)^{-1})|\leq C+D|h(fg)^{-1}|_{1}+(A+1)|h(fg)^{-1}|_{1}=\\
&C+(D+A+1)|h(fg)^{-1}|_{1}\leq C+(D+A+1)K,
\end{align*}
where the second inequality follows from Property \ref{p1}, and the third inequality follows from Property \ref{p4}. Hence by \eqref{eq: fg}
$$|\Phi_n(h)|\leq | \Phi_n(fg)|+C+(D+A+1)K\leq C_1+C+(D+A+1)K.$$
We conclude the proof by denoting $B:=C_1+C+(D+A+1)K$.

\subsection{Property \ref{p2}} First, we show that for each $f\in\G$ the sequence $\{\Phi_n(f^k)/k\}_{k=1}^\infty$ is bounded, 
which in turn implies that $\ol{\Phi}_n(f)$ is well-defined. It follows from \cite[Lemma 2.1]{2019-BM1} that there exists a constant $K_1$ such that 
for almost every $x\in\OP{X}_n(S^2)$ we have $|\varphi(\gamma(f,x))|\leq K_1$. Cocycle condition \ref{eq:cocycle} implies that 
$$\varphi(\gamma(f^k,x))=\varphi(\gamma(f,f^{k-1}(x))\cdot\ldots\cdot\gamma(f,x)).$$ 
Hence 
$$|\varphi(\gamma(f^k,x))|/k\leq K_1+D_\varphi.$$
The above inequality yields
\begin{align*} 
&|\Phi_n(f^k)/k|\leq\sum_{k=2}^{n-2}\int\limits_{\OP{X}_{n,k}(D_+)}|\varphi(\gamma(f^k,x))|/k\thinspace dx\leq\\
&\int\limits_{\OP{X}_n(S^2)}|\varphi(\gamma(f^k,x))|/k\thinspace dx\leq (K_1+D_\varphi)\vol(\OP{X}_n(S^2)),
\end{align*} 
which implies that $\ol{\Phi}_n(f)$ is well-defined.

Now we prove that there exists $h\in\G$ such that $\ol{\Phi}_n(h)\neq 0$. For the simplicity we prove this fact for $n=4$.
Let $0\neq \varphi\in Q_{\OP{BF}}(B_4(S^2))$ and $\beta\in P_4(S^2)<B_4(S^2)$ such that $\varphi(\beta)=b\neq 0$. By the Ishida construction
\cite{Ishida} (see also \cite{2019-BM1, 2015-IJM}), there exist four embedded discs $D_1, D_2, D_3, D_4$ in $S^2$ and $f\in\G$ such that:
\begin{itemize}
\item[\textbullet] $D_1,D_2\subset D_+$, $D_1\cap D_2=\emptyset$, $D_3,D_4\subset D_-$, $D_3\cap D_4=\emptyset$, $a_i:=\area(D_i)$.
\item[\textbullet] For each $k\in\mathbb{Z}$, each permutation $\sigma\in S_4$ and each $x=(x_1,x_2,x_3,x_4)\in \OP{X}_4(S^2)$ such that 
$x_i\in D_{\sigma(i)}$ we have $\varphi(\gamma(f^k,x))=kb$.
\end{itemize}
By definition $\ol{\Phi}_4(f)=\lim_{p\to\infty}\Phi_4(f^{k_p})/k_p$ for some increasing sequence $\{k_p\}_{p=1}^\infty$. Thus
\begin{align*} 
&\ol{\Phi}_4(f)=\lim_{p\to\infty}\Phi_4(f^{k_p})/k_p=\lim_{p\to\infty}\int\limits_{\OP{X}_{4,2}(D_+)}\frac{\varphi(\gamma(f^{k_p},x))}{k_p}\thinspace dx=\\
&\lim_{p\to\infty}\left(\int\limits_{\OP{X}_4(D_1\cup\ldots\cup D_4)}\frac{\varphi(\gamma(f^{k_p},x))}{k_p}\thinspace dx +
\int\limits_{\OP{X}_{4,2}(D_+)\setminus \OP{X}_4(D_1\cup\ldots\cup D_4)}\frac{\varphi(\gamma(f^{k_p},x))}{k_p}\thinspace dx\right),
\end{align*}
where $\OP{X}_4(D_1\cup\ldots\cup D_4)$ consists of those 4-tuples $x=(x_1,x_2,x_3,x_4)\in \OP{X}_4(S^2)$ where each $x_i$ lies in one of $D_j$.
Note that construction of $f$ yields
\begin{align} 
&\lim_{p\to\infty}\int\limits_{\OP{X}_4(D_1\cup\ldots\cup D_4)}\frac{\varphi(\gamma(f^{k_p},x))}{k_p}\thinspace dx=4!(ba_1a_2a_3a_4+
c_{1133}a_1^2a_3^2+c_{1144}a_1^2a_4^2+\label{eq:polynomial}\\
&c_{1134}a_1^2a_3a_4+c_{1233}a_1a_2a_3^2+c_{1244}a_1a_2a_4^2+c_{2233}a_2^2a_3^2+c_{2234}a_2^2a_3a_4+c_{2244}a_2^2a_4^2),\nonumber
\end{align}
where $c_{ijkl}:=\varphi(\gamma(f, (x_i,x_j,x_k,x_l)))$ such that $x_i\in D_i, x_j\in D_j, x_k\in D_k, x_l\in D_l$. 

The expression in \ref{eq:polynomial} is a homogeneous polynomial $P(a_1,a_2,a_3,a_4)$ in variables $a_1,a_2,a_3,a_4$,
i.e., $P(ra_1,ra_2,ra_3,ra_4)=r^4P(a_1,a_2,a_3,a_4)$. Without loss of generality we assume that $\area(S^2)=1$. 
Since polynomial $P$ is non-trivial, there exist $a_1,a_2,a_3,a_4\in\mathbb{R}$ 
and $0\neq c\in\mathbb{R}$ such that $a:=a_1+a_2+a_3+a_4<1$ and $P(a_1,a_2,a_3,a_4)=c$.
Again by  Ishida construction, for each $0<r<\frac{1}{a}$ 
there exist four embedded discs $D_{1,r}, D_{2,r}, D_{3,r}, D_{4,r}$ in $S^2$ and $f_r\in\G$ such that:
\begin{itemize}
\item[\textbullet] $D_{1,r}, D_{2,r}\subset D_+$, 
$D_{1,r}\cap D_{2,r}=\emptyset$, $D_{3,r}, D_{4,r}\subset D_-$, $D_{3,r}\cap D_{4,r}=\emptyset$, $ra_i=\area(D_{i,r})$.
\item[\textbullet] For each $k\in\mathbb{Z}$, each permutation $\sigma\in S_4$ and each $x=(x_1,x_2,x_3,x_4)\in \OP{X}_4(S^2)$ such that 
$x_i\in D_{\sigma(i),r}$ we have $\varphi(\gamma(f_r^k,x))=kb$.
\end{itemize}
We obtain
$$\ol{\Phi}_4(f_r)=\lim_{p\to\infty}\Phi_4(f_r^{k_{r,p}})/k_{r,p}=r^4c+
\lim_{p\to\infty}\int\limits_{\OP{X}_{4,2}(D_+)\setminus \OP{X}_4(D_{1,r}\cup\ldots\cup D_{4,r})}
\frac{\varphi(\gamma(f_r^{k_{r,p}},x))}{k_{r,p}}\thinspace dx.$$

There exists $d\in\mathbb{N}$ and $A_i\in P_n(S^2)$ the standard Artin generator for each $1\leq i\leq d$ such that
$\beta=A_1\cdot\ldots\cdot A_d$. Again, by Ishida construction 
$$f_r=f_{1,r}\circ\ldots\circ f_{d,r},$$ 
where each $f_{i,r}\in\G$ is a Morse autonomous diffeomorphism (on its support). 
Moreover, for almost every $x\in\OP{X}_4(S^2)$ we have 
$$\gamma(f_{i,r},x)=\alpha'_{f_{i,r},x}\delta_{f_{i,r},x}\alpha''_{f_{i,r},x}$$
such that $\delta_{f_{i,r},x}$ is a commuting product of reducible braids 
and the length of braids $\alpha'_{f_{i,r},x},\alpha''_{f_{i,r},x}$ 
is universally bounded by a constant which does not depend on $r$ and $x$, 
see e.g. \cite[proof of Theorem 4.5]{BrandenburskyKnots}. 
It follows that there exists $K_1>0$ such that for almost every 
$x\in \OP{X}_4(S^2)$ and every $0<r<\frac{1}{a}$ we have
$$|\varphi(\gamma(f_{i,r},x))|\leq K_1.$$
It follows that 
$$\frac{\left|\varphi(\gamma(f_r^{k_{r,p}},x))\right|}{k_{r,p}}\leq d(K_1+D_\varphi).$$
Thus 
$$\left|\lim_{p\to\infty}\int\limits_{\OP{X}_{4,2}(D_+)\setminus \OP{X}_4(D_{1,r}\cup\ldots\cup D_{4,r})}
\frac{\varphi(\gamma(f_r^{k_{r,p}},x))}{k_{r,p}}\thinspace dx\right|
\leq d(K_1+D_\varphi)\vol(\OP{X}_{4,2}(D_+)\setminus \OP{X}_4(D_{1,r}\cup\ldots\cup D_{4,r})).$$

Since $\lim_{r\to\frac{1}{a}}\vol(\OP{X}_{4,2}(D_+)\setminus \OP{X}_4(D_{1,r}\cup\ldots\cup D_{4,r}))=0$, we obtain
$$\lim_{r\to\frac{1}{a}}\ol{\Phi}_4(f_r)=\frac{c}{a^4}\neq 0.$$
It follows that there exists $r$ such that $\ol{\Phi}_4(f_r)\neq 0$. By setting $h:=f_r$ we finish the proof of Property \ref{p2}.

\section{General loops and surfaces}\label{sec: other} 

In this section we prove Theorem \ref{thm: generalized}.
Let $L$ be a separating curve in $\Sigma$. 
Note that in order to prove Theorem \ref{thm: generalized} it is enough to construct a functional 
$$\Phi_{\Sigma, n}:\G_\Sigma\to\mathbb{R}$$ 
which satisfies properties \ref{p1}, \ref{p2}, \ref{p3} and \ref{p4} in the case of 
$\G_\Sigma$.

The construction of $\Phi_{\Sigma, n}$ is very similar to the construction of $\Phi_n$
presented in Subsection \ref{ssec: paramorphism}. 
Let $n\in\mathbb{N}$ and let $\OP{C}_n(\Sigma)$ 
be the configuration space of all unordered $n$-tuples of pairwise
distinct points in $\Sigma$. Recall the Birman Push map (see e.g. \cite{Farb-Margalit}):
$$
\OP{Push}\colon B_n(\Sigma)\to\OP{MCG}(\Sigma,n),
$$
where $B_n(\Sigma)=\pi_1(\OP{C}_n(\Sigma),z)$ is the surface braid group on $n$ strands
and $\OP{MCG}(\Sigma,n)$ is the mapping class group of the $n$-punctured surface $\Sigma$. 
This map is injective when $\Sigma$ is a hyperbolic surface. 
In case when $\Sigma$ is a torus $T^2$ this map has a kernel
which equals to the center $Z(B_n(T^2))\cong\mathbb{Z}^2$. The braid $\alpha$ 
is called \emph{reducible} if $\OP{Push}(\alpha)$ is a reducible mapping class.
We denote by $Q_{\OP{BF}}(B_n(\Sigma))$ the space of homogeneous quasimorphisms
on $B_n(\Sigma)$ which vanish on reducible braids.
It follows from the celebrated paper by Bestvina and Fujiwara \cite{BestvinaFujiwara} that
the space $Q_{\OP{BF}}(B_n(\Sigma))$ is infinite dimensional for $n>1$, see \cite[Section 4.A.]{2019-BM1}.
Note that when $n=1$ and $\Sigma$ is hyperbolic, then the space $Q_{\OP{BF}}(\pi_1(\Sigma, z))$ is infinite dimensional as well,
see \cite[Section 4.A.]{2019-BM1}.

Let $\{f_t\}$ be an isotopy in $\G_\Sigma$ from the identity 
to $f\in\G_\Sigma$. For $x,y\in\Sigma$ we define a loop
$\gamma_{x,y}\colon [0,1]\to \Sigma$ by
$$
\gamma_{x,y}(t):=
\begin{cases}
\alpha_{3t} &\text{ for } t\in \left [0,\frac13\right ]\\
f_{3t-1}(x) &\text{ for } t\in \left [\frac13,\frac23\right ]\\
\beta_{3t-2} & \text{ for } t\in \left [\frac23,1\right ],
\end{cases}
$$
where $\alpha_t$ is a shortest path on $\Sigma$ from $y$ to $x$,
and $\beta_t$ is a shortest path on $\Sigma$ from $f(x)$ to $y$.

Let $\OP{X}_n(\Sigma)$ be the configuration space of all ordered $n$-tuples
of pairwise distinct points in $\Sigma$. Its fundamental group
$\pi_1(\OP{X}_n(\Sigma),z)$ is identified with the surface pure braid group $P_n(\Sigma),$ where $z=(z_1,\ldots,z_n)$ in $\OP{X}_n(\Sigma)$ is a base point.
For almost every $x=(x_1,\ldots,x_n)\in \OP{X}_n(\Sigma)$ the
$n$-tuple of loops $(\gamma_{x_1,z_1},\ldots,\gamma_{x_n,z_n})$ is
a based loop in the configuration space $\OP{X}_n(\Sigma)$.
Let $\gamma(\{f_t\},x)\in P_n(\Sigma)=\pi_1(\OP{X}_n(\Sigma),z)$
be the element represented by this loop, and let
$\varphi\colon P_n(\Sigma)\to \B R$ be a homogeneous quasimorphism.
If $\Sigma$ is a hyperbolic surface then the center $Z(P_n(\Sigma))$ is trivial, and hence 
the braid $\gamma(\{f_t\},x))$ does not depend on the choice of the isotopy $\{f_t\}$.
If $\Sigma=T^2$, and $\varphi$ vanishes on $Z(B_n(T^2))\cong\mathbb{Z}^2$ then the number
$\varphi(\gamma(\{f_t\},x))$ does not depend on the choice of the isotopy $\{f_t\}.$ In both cases from now on $\varphi(\gamma(\{f_t\},x))$
is denoted by $\varphi(\gamma(f,x))$. Similarly to the spherical case, for $f,g\in\G_\Sigma$ we have the following cocycle condition:
\begin{equation}\label{eq:cocycle Sigma}
\varphi(\gamma(gf,x))=\varphi(\gamma(g,f(x))\cdot\gamma(f,x)).
\end{equation}

\textbf{Case 1}. Let $L$ be an $a$-equator in $\Sigma$ and  $D_L$ be an open disk in $\Sigma$ whose boundary is $L$. 
Let $n>3$ and denote by $\OP{X}_n(D_L\cup (\Sigma\setminus D_L)^\circ)\subset\OP{X}_n(\Sigma)$ the subspace 
consisting of those tuples of $n$ points in $\Sigma$, such that either all points in such a tuple lie in $D_L$, 
or all points in such a tuple lie in $(\Sigma\setminus D_L)^\circ$, 
or exactly $n-1$ points lie in $D_L$ and one point in $(\Sigma\setminus D_L)^\circ$, 
or exactly $n-1$ points lie in $(\Sigma\setminus D_L)^\circ$ and one point in $D_L$. 
Let $\varphi\in Q_{\OP{BF}}(B_n(\Sigma))$ a nontrivial homogeneous quasimophism. 
We define $\Phi_{\Sigma, n}: \cl G_\Sigma \to \R$ as follows:
\begin{equation}\label{eq:paramorphism surfaces}
\Phi_{\Sigma, n}(f):=
\int\limits_{\OP{X}_n(\Sigma)}\varphi(\gamma(f,{x}))d{x}\quad-\int\limits_{\OP{X}_n(D_L\cup (\Sigma\setminus D_L)^\circ)}\varphi(\gamma(f,{x}))d{x}.
\end{equation}

Now, the proof of Properties \ref{p1}, \ref{p3} and \ref{p4} is identical to the proof of these properties in the spherical case. The proof 
of Property \ref{p2} is very similar to the spherical case as well. 
The key idea, similar to the spherical case, is to use the Ishida construction for $\G_\Sigma$ presented in 
\cite{2019-BM1}, see also \cite{2015-IJM, 2018-CCM}. We leave the details for the interested reader 
and obtain a proof of Case 1.

\textbf{Case 2}. Let $L$ be a separating curve in $\Sigma$ such that $\Sigma\setminus L=\Sigma_1\sqcup\Sigma_2$, where $\Sigma_i$ is a non-contractible surface for $i=1,2$. Note that in this case $\Sigma$ must be a hyperbolic surface.
Let $\varphi\in Q_{\OP{BF}}(\pi_1(\Sigma, z))$ a nontrivial homogeneous quasimophism, and 
$\Phi_\Sigma: \cl G_\Sigma \to \R$ be the Polterovich quasimorphism (see \cite{PolterovichDynamicsGroups}) defined as follows:
\begin{equation}
\Phi_\Sigma(f):=
\int\limits_{\Sigma}\varphi(\gamma(f,{x}))d{x}.
\end{equation}

Now, the proof of Property \ref{p1} is immediate since $\Phi_\Sigma$ is a quasimorphism. The proof 
of Property \ref{p2} is presented in \cite{2019-BM1}, see also \cite{2015-IJM, 2018-CCM}. Property \ref{p4}
follows immediately from \cite[Theorem 1]{BMS-SM}. 

Let us discuss the proof of Property \ref{p3}. Note that each 
$\varphi\in Q_{\OP{BF}}(\pi_1(\Sigma, z))$ vanishes on every $\alpha\in \pi_1(\Sigma, z)$ which is represented by a loop supported 
in either $\Sigma_1$, or in $\Sigma_2$ since each such $\alpha$ is reducible. 
Let $f\in\cl G_\Sigma$ supported in $\Sigma_1$ or in $\Sigma_2$. Then for each $x\in\Sigma$ we have 
$\gamma(f,x)=\alpha_{f,x}\beta_{f,x}$, 
where the word length of $\alpha_{f,x}$ is bounded by a constant $C$ which depends only on the geometry of $\Sigma$,
and $\beta_{f,x}$ is a reducible element in $\pi_1(\Sigma, z)$. 
It follows that there exists $C_1$ which depends on the geometry of $\Sigma$,
such that for every $f\in\cl G_{\Sigma_1}$ and $g\in\cl G_{\Sigma_2}$ we have $|\Phi_\Sigma(fg)|\leq C_1$.
Now we proceed exactly as in the spherical case and obtain the proof of Property \ref{p3}, and hence of Case 2 as well.

\begin{rmk}\label{rmk: compact}
Theorem \ref{thm: generalized} is also true for an orientable surface of any genus with non-trivial boundary. Moreover it applies to ``diameters" in the surface, defined as embedded paths with endpoints on the boundary, which are contractible relative to the boundary (in the case of the disk and Hofer's metric the geometry of this space was studied in \cite{Khanevsky-diam}). More specifically, we fix the area of the disk component of the complement of the path, and look at the space of diameters coinciding with a given fixed diameter in a neighborhood of the boundary. The proof goes along the same lines and is left to the reader.   
\end{rmk}

\begin{rmk}
Finally, we expect Proposition \ref{prop: non-deg} and Theorem \ref{thm: generalized} to generalize to the case where $L \subset \Sigma$ is a smooth embedded {\em non-separating} curve. It would be interesting to investigate this direction in the future.
\end{rmk}



\bibliographystyle{plainurl}
\bibliography{lpeq-refs}
\noindent\\

\end{document}